\newtheorem{theorem}{Theorem}[section]
\newtheorem{lemma}[theorem]{Lemma}
\theoremstyle{definition}
\newtheorem{definition}[theorem]{Definition} 
\newtheorem{constr}[theorem]{Construction}
\newcommand{\fF}{{\mathfrak F}} 
\newcommand{\Z}{\mathcal{Z}}
\newcommand{\ideq}{\unlhd}
\newcommand{\sn}{\mbox{$\triangleleft\mspace{-1.8mu}\triangleleft\medspace$}}
\DeclareMathOperator{\Leib}{Leib}
\DeclareMathOperator{\ch}{char}
\DeclareMathOperator{\Ann}{Ann}
\title{Faithful representations of Leibniz algebras}
\author{Donald W. Barnes}
\address{1 Little Wonga Rd.\\Cremorne NSW 2090\\Australia\\}
\email{donwb@iprimus.com.au}
\subjclass[2010]{Primary 17A32}
\keywords{Leibniz algebras, Lie algebras, saturated formations}
\begin{document}

\begin{abstract} Let $L$ be a Leibniz algebra of dimension $n$.  I prove  the existence of a faithful $L$-module of dimension less than or  equal to $n+1$.
\end{abstract}

\maketitle

\section{Introduction}

A left Leibniz algebra is a linear algebra $L$ whose left multiplication operators $d_a: L \to L$, defined by $d_a(x) = ax$ for all $a, x \in L$, are derivations.  A module for the Leibniz algebra $L$ is a vector space $V$ with two bilinear compositions $xv, vx$ for $x \in L$ and $v \in V$ such that 
\begin{equation*}\begin{split}
x(yv) &= (xy)v + y(xv)\\
x(vy) &= (xv)y + v(xy)\\
v(xy) &= (vx)y + x(vy)\\ \end{split} \end{equation*}
for all $x,y \in L$ and $v \in V$. 
These are precisely the conditions which would be satisfied if $L$ and $V$ were contained as subalgebra and abelian ideal in some Leibniz algebra.  Denoting the left action $v \mapsto xv$ by $\lambda_x$ and the right action $v \mapsto vx$ by $\rho_x$, the above conditions are equivalent to
\begin{equation*}\begin{split}
\lambda_x   \lambda_y &= \lambda_{xy} + \lambda_y   \lambda_x\\
\lambda_x \rho_y &=\rho_y \lambda_x + \rho_{xy}\\
\rho_{xy} &= \rho_y \rho_x + \lambda_x\rho_y\\
\end{split}\end{equation*}
The basic theory of Leibniz algebras and their modules is set out in Loday and Pirashvili \cite{LP}.  For any $L$-module $V$, I denote by $\ker(V)$ the kernel of the representation of $L$ on $V$ in $L$, that is, $\{x \in L \mid \lambda_x = \rho_x = 0\}$.  I denote the centre of $L$ by $\Z(L)$ and the left annihilator $\{x\in L\mid xL=0\}$ by $\Ann(L)$.

The subspace $\langle x^2 \mid x \in L \rangle$ spanned by the squares of elements of $L$ is called the Leibniz kernel of $L$ and denoted $\Leib(L)$.  It is an abelian ideal of $L$, $\Leib(L)L = 0$ and $L/\Leib(L)$ is a Lie algebra.  Thus Leibniz algebras are almost Lie algebras and it is natural to consider which of the theorems about Lie algebras generalise to Leibniz algebras.  For example, Ayupov and Omirov \cite[Theorem 2]{AyO} and Patsourakos \cite[Theorem 7]{Pats} have shown that Engel's Theorem holds for Leibniz algebras. 

For a finite-dimensional Lie algebra $L$ over a field $F$, the Ado-Iwasawa Theorem asserts that there exists a faithful finite-dimensional $L$-module $V$.  There are several extension of this result which assert the existence of such a module $V$ with additional properties.  For example, Hochschild \cite{Hoch} proved that there exists a module $V$ with the additional property that the action $\lambda_x$ is nilpotent for all $x \in L$ for which $d_x$ is nilpotent.  Jacobson has proved \cite[Theorem VI.2, p. 205]{Jac} that, if  $\ch(F) = p \ne 0$, then there exists a completely reducible such $V$. Barnes \cite[Theorem 5.1]{extras} has shown the existence of a module $V$ which, in addition to the Hochschild property, has the property that $V$ is $(U,\fF)$-hypercentral for every saturated formation  $\fF$ of soluble Lie algebras and every subnormal subalgebra $U$ of $L$ which is in $\fF$.  I investigate the extent to which these results generalise to Leibniz algebras.  The theory of formations and $\fF$-hypercentral modules for Lie algebras is set out in Barnes and Gastineau-Hills \cite{BGH} and for Leibniz algebras in Barnes \cite{SchunckLeib}. 

By a theorem of Loday and Pirashvili \cite{LP}, if $V$ is an irreducible module for the Leibniz algebra $L$, then $L/ \ker(V)$ is a Lie algebra and $V$ is either symmetric (that is, $vx=-xv$ for all $x \in L$ and $v \in V$) or antisymmetric (that is, $VL = 0$).  Thus $\Leib(L) \subseteq \ker(V)$ if $V$ is completely reducible.  Hence the Jacobson addition to the Ado-Iwasawa Theorem cannot generalise to Leibniz algebras.

An easy consequence of the definition of a Leibniz module (see Patsourakos \cite[Lemma 6]{Pats} is that $\rho_x^n = (-1)^{n-1} \rho_x\lambda_x^{n-1}$ for all $x \in L$.  If $\lambda_x$ is nilpotent, then so is $\rho_x$.  

If $U$ is an ideal of $L$, I write $U \ideq L$.  A subalgebra $U$ is subnormal in $L$ if there exists a chain of subalgebras
$U = U_0 \ideq U_1 \ideq \dots \ideq U_n = L$.  In this case, I write $U \sn L$.

For finite-dimensional Lie algebras, Iwasawa proved the existence of faithful finite-dimensional modules  by proving the existence of finite-dimensional splitting algebras.  In Section \ref{use}, I give the definition of  splitting algebras and explain their use.  In Section \ref{constr}, I give a simple construction for a splitting algebra for a Leibniz algebra $L$ and abelian ideal $A$, but only for the case $AL=0$ needed for the proof of the existence of finite-dimensional faithful modules.  The existence of finite-dimensional splitting algebras for arbitrary $A$ remains an open question.  In Section \ref{main}, I prove the existence of faithful finite-dimensional modules and look at additional conditions we may require of those modules.

The proof of the main result does not depend on the Ado-Iwasawa Theorem.  As Lie algebras are special cases of Leibniz algebras, it might appear that this provides a simple proof of the Ado-Iwasawa Theorem.  It does not.  It proves for a finite-dimensional Lie algebra the existence of a faithful finite-dimensional {\em Leibniz} representation.  The right action on the module may differ from the left action which need not be faithful.

\section{The use of splitting algebras} \label{use}
\begin{definition} Let $A$ be an abelian ideal of the Lie or Leibniz algebra $L$.  A splitting algebra for $(L,A)$ is a pair $(M,B)$ where $M$ is a Lie (respectively Leibniz) algebra containing $L$ and $B$ is an abelian ideal of $M$ such that  $L +B = M$, $L \cap B = A$ and such that $M$ splits over $B$.
\end{definition}

A key step in Iwasawa's proof in \cite{Iw} was his proof of the following theorem.

\begin{theorem} \label{IwSplit} Let $A$ be an abelian ideal of the finite-dimensional Lie algebra $L$ over any field $F$.  Then there exists a finite-dimensional splitting algebra for $(L,A)$.
\end{theorem}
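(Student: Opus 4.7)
The plan is to recognize Theorem \ref{IwSplit} as a cohomological statement and prove it by embedding $A$ in a finite-dimensional $Q$-module where the classifying class of the extension vanishes. Set $Q = L/A$. Because $A$ is an abelian ideal, the adjoint action of $L$ on $A$ descends to an action of $Q$, making $A$ a finite-dimensional $Q$-module. Choosing a vector-space section $s\colon Q \to L$ of the canonical projection gives a $2$-cocycle
\[ \omega(\bar x, \bar y) = [s(\bar x), s(\bar y)] - s([\bar x, \bar y]) \in A, \]
whose class $[\omega] \in H^2(Q, A)$ classifies the extension $0 \to A \to L \to Q \to 0$.

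I would then observe that a finite-dimensional $Q$-module $B$ together with a $Q$-linear embedding $\iota\colon A \hookrightarrow B$ satisfying $\iota_*[\omega] = 0$ in $H^2(Q, B)$ determines a splitting algebra. The construction is the pushout $M = (L \ltimes B)/N$, where $L \ltimes B$ is the semidirect product (with $L$ acting on $B$ through $Q$) and $N = \{(a, -\iota(a)) : a \in A\}$. A routine verification shows $N$ is an ideal, that $L$ and $B$ inject in $M$ with $L + B = M$, $L \cap B = A$, and $B$ abelian, and that the extension class of $0 \to B \to M \to Q \to 0$ equals $\iota_*[\omega] = 0$, so $M$ splits over $B$.

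It remains to construct such a $B$. Conceptually, Shapiro's lemma gives $H^{\geq 1}(Q, \mathrm{Hom}_F(U(Q), A)) = 0$, so $[\omega]$ certainly dies in the coinduced module, but that module is infinite-dimensional. I would instead build $B$ by hand: choose a basis $q_1, \ldots, q_n$ of $Q$, adjoin new generators $e_1, \ldots, e_n$, take $B = A \oplus Fe_1 \oplus \cdots \oplus Fe_n$ with $\iota$ the inclusion on the first summand, and prescribe the $Q$-action on the $e_i$ so that the linear map $\psi\colon Q \to B$ with $\psi(q_i) = e_i$ satisfies $d\psi = \iota\omega$. The cocycle identity $d\omega = 0$ is precisely what is needed to extend this prescription to a Jacobi-compatible $Q$-module structure on $B$.

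The principal obstacle is this last step: cutting Shapiro's coinduced solution down to finite dimension while simultaneously preserving the embedding of $A$ and the Jacobi identity on $B$ is the Lie-theoretic core of the theorem. Once such a $B$ is in hand, the pushout of the second paragraph delivers the required finite-dimensional splitting algebra.
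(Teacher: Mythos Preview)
The paper does not prove Theorem~\ref{IwSplit}; it is quoted as Iwasawa's result and used as a black box. The only splitting-algebra construction the paper actually carries out is Construction~\ref{constsplit}, which treats the special Leibniz case $AL=0$ needed for the main theorem and is considerably easier than the general Lie statement. So there is no proof in the paper to compare your attempt against.

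Your cohomological reformulation is correct, and the pushout $M=(L\ltimes B)/N$ genuinely yields a splitting algebra once you have a finite-dimensional $Q$-module $B$, a $Q$-embedding $\iota\colon A\hookrightarrow B$, and $\iota_*[\omega]=0$. The gap is exactly where you place it, but the sketch you offer for closing it does not work as stated. Writing $B=A\oplus Fe_1\oplus\cdots\oplus Fe_n$ and asking that $\psi(q_i)=e_i$ satisfy $d\psi=\iota\omega$ constrains only the antisymmetric combinations $q_i\cdot e_j - q_j\cdot e_i$; it does not define the individual actions $q_i\cdot e_j$, and an arbitrary completion of those constraints has no reason to satisfy the module identity $[x,y]\cdot v = x\cdot(y\cdot v)-y\cdot(x\cdot v)$. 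The cocycle relation $d\omega=0$ is necessary for consistency but is not ``precisely what is needed'': producing a finite-dimensional $B$ in which $[\omega]$ dies is the entire content of Iwasawa's theorem, and his argument is substantially more involved than a one-step enlargement by $\dim Q$ new generators. As written, your proposal correctly identifies the target but does not hit it.
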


To prove the existence of a faithful finite-dimensional module for the Lie algebra $L$, Iwasawa used induction over the dimension to reduce to the case in which the centre $Z =\langle z \rangle$ is the only minimal ideal of $L$.  With $(M, B)$ a splitting algebra for $(L,Z)$  and $U$ a complement to $B$ in $M$, he constructed a module $V = \langle e, B\rangle $ with the action given by $(u+b)e=b$ and $(u+b)b' = ub'$ for $u \in U$ and $b,b' \in B$.  As $L$ is a subalgebra of $M$, this is also an $L$-module.  Since $z$ acts non-trivially, the unique minimal ideal of $L$ cannot be in the kernel.  Thus $V$ is a faithful $L$-module.

Now suppose that $L$ is a finite-dimensional Leibniz algebra.  I adjust Iwasawa's construction to allow for the fact that the left action of any element of $\Leib(L)$ on any module must be $0$. 
 
\begin{constr} \label{constmod} Let $A$ be an ideal of the Leibniz algebra $L$ such that $AL=0$.  Let $(M,B)$ be a splitting algebra for $(L,A)$ and let $U$ be a complement to $B$ in $M$. Put $V = \langle e, B \rangle $ with the action of $M$ on $V$ given by $\lambda_{u+b}e = 0$, $\rho_{u+b}e= b$, $\lambda_{u+b}b' = ub'$ and $\rho_{u+b}b' = 0$ for $u \in U$ and $ b,b' \in B$. 
\end{constr}

I shall call this module $V$  the Construction \ref{constmod} module for $(M,B)$.  Provided that $A \supseteq \Ann(L)$, the left action of $U \simeq L/A$ on $B$ is faithful, so $\ker_M(V) \subseteq B$. But the right action of $B$ on $V$ is faithful.  It follows that $\ker_M(V)=0$.  Thus $M$ and $L$ are faithfully represented on $V$.

\section{Construction of a splitting algebra} \label{constr}
I now give a construction for a splitting algebra under the conditions required for the main theorem.

\begin{constr} \label{constsplit}
Let $A$ be an abelian ideal of $L$ with $AL=0$.    Let $W$ be an $L$-module  with $\pi:L \to W$ a left $L$-module isomorphism and with $WL=0$.  Put $A_1 = \pi(A)$.  Construct $X$ the split extension of $W$ by $L$. In $X$, form $D = \{a-\pi(a) \mid a \in A\}$ and  $E = \{x-\pi(x)  \mid x \in L\}$.  The diagrams below show the relationships of the spaces considered.
\begin{center}
\setlength{\unitlength}{1mm}
\begin{picture}(120,40)(-54,0)
\put(-20,1){\circle*{1.5}}     \put(-23.5,0){$0$}
\put(-20,1){\line(-1,1){18}}
\put(-38,19){\circle*{1.5}}     \put(-43,18){$L$}
\put(-29,10){\circle*{1.5}}     \put(-33.5,9){$A$}
\put(-20,1){\line(1,1){18}}
\put(-38,19){\line(1,1){18}}
\put(-29,10){\line(1,1){18}}
\put(-11,10){\circle*{1.5}}     \put(-10,9){$A_1$}
\put(-11,10){\line(-1,1){18}}
\put(-2,19){\circle*{1.5}}       \put(0,18){$W$}
\put(-2,19){\line(-1,1){18}}
\put(-20,1){\line(0,1){36}}
\put(-20,10){\line(-1,1){9}}
\put(-29,19){\circle*{1.5}}     \put(-33,18){$E$}
\put(-29,19){\line(1,1){9}}
\put(-20,10){\circle*{1.5}}     \put(-18.5,9){$D$}
\put(-20,28){\circle*{1.5}}
\put(-20,19){\circle*{1.5}}
\put(-20,37){\circle*{1.5}}     \put(-18.5,36){$X=L+W$}
\put(-29,28){\circle*{1.5}}     \put(-40,27){$L+D$}
\put(-11,28){\circle*{1.5}}

\put(40,1){\circle*{1.5}}    \put(36.5,0){$0$}
\put(40,1){\line(0,1){36}}
\put(40,13){\circle*{1.5}}    \put(42,12){$\bar{A}=\bar{A}_1$}
\put(40,13){\line(-1,1){12}}
\put(28,25){\circle*{1.5}}    \put(24.2,23.5){$\bar{L}$}
\put(28,25){\line(1,1){12}}
\put(40,37){\circle*{1.5}}    \put(41.5,36){$M=\bar{L}+B$}
\put(40,13){\line(1,1){12}}
\put(52,25){\circle*{1.5}}    \put(53.5,23.5){$B$}
\put(52,25){\line(-1,1){12}}

\put(40,1){\line(-1,1){12}}
\put(28,13){\circle*{1.5}}    \put(24.2,12.2){$U$}
\put(28,13){\line(1,1){12}}
\put(40,25){\circle*{1.5}}
\end{picture}
\end{center}

$D$ is an ideal of the algebra $X$.  We form the quotient $M = X/D$ and put $B = (W+D)/D$.   We have $\bar{L} = (L+D)/D \simeq L$ with the ideal $\bar{A}$ corresponding to $A$.  The ideal $B$ of $M$ satisfies $BM = 0$, $\bar{L}+B = M$ and $B\cap \bar{L} = \bar{A}$.  For elements $e_1=x_1-\pi(x_1)$ and $e_2=x_2-\pi(x_2)$ in $E$, we have $e_1e_2 = x_1x_2-x_1\pi(x_2) = x_1x_2-\pi(x_1x_2)$ since $WL=0$ and $\pi$ is a left module isomorphism.  Thus $E$ is a subalgebra of $X$.  It follows that $U= E/D$ is a subalgebra of $M$ complementing $B$ and that $(M, B)$ is a splitting algebra for $(\bar{L}, \bar{A})$ and so, for $(L,A)$.  
\end{constr}

I shall refer to this splitting algebra $(M,B)$ as the Construction \ref{constsplit} splitting algebra for $(L,A)$.  Note that $\dim(B) = \dim(L)$.

\section{The main theorem} \label{main}

Now consider the Hochschild addition to the Ado-Iwasawa Theorem which requires that the left action $\lambda_x$ of $x$ on $V$ be nilpotent for all $x \in L$ for which the left action $d_x$ of $x$ on $L$ is nilpotent.  To incorporate this condition into the theorem for Leibniz algebras, I consider how elements of $L$ with $d_x$ nilpotent act on our Construction \ref{constsplit} splitting algebra $(M,B)$ for $(L,A)$ and its Construction \ref{constmod} module.

\begin{lemma} \label{leftnil}  Let $A$ be an ideal of the Leibniz algebra $L$ such that $AL=0$.  Let $(M,B)$ be the Construction \ref{constsplit} splitting algebra for $(L,A)$ and let $V = \langle e, B \rangle$ be its Construction \ref{constmod} module. Let $x$ be an element of $L$ whose left action $d_x:L \to L$ is nilpotent.  Then the left action $\lambda_x: V \to V$ is nilpotent.
\end{lemma}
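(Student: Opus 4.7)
The plan is to trace how $x\in L$ acts on $V=\langle e,B\rangle$ by tracking its image in $M=X/D$ and decomposing it according to $M=U\oplus B$. First I would write $x=(x-\pi(x))+\pi(x)$ in $X$, so that the image $\bar x\in M$ decomposes as $\bar x=u_x+b_x$ with $u_x=(x-\pi(x))+D\in E/D=U$ and $b_x=\pi(x)+D\in B$. Then Construction \ref{constmod} gives the two pieces of data I need: $\lambda_{\bar x}e=0$, so $e$ lies in the kernel of $\lambda_x$; and for any $b'\in B$, $\lambda_{\bar x}b'=u_xb'$, so on $B$ the action $\lambda_x$ is just left multiplication by $u_x$.

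Next I would identify this restricted action with $d_x$ on $L$. Since $B\cong W$ as vector spaces under the quotient $W\to(W+D)/D$ and $\pi:L\to W$ is a left $L$-module isomorphism, for any $y\in L$ I compute
\begin{equation*}
u_x\,\pi(y)=(x-\pi(x))\pi(y)=x\pi(y)-\pi(x)\pi(y)=\pi(xy),
\end{equation*}
using $W$ abelian (so $\pi(x)\pi(y)=0$) and $\pi$ a left module map. Hence, transporting along the isomorphism $L\to B$, $y\mapsto\pi(y)+D$, the restriction $\lambda_x|_B$ is conjugate to $d_x:L\to L$.

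Therefore nilpotency of $d_x$ on $L$ immediately forces nilpotency of $\lambda_x|_B$. Combined with $\lambda_x e=0$ and the vector space decomposition $V=Fe\oplus B$, I conclude that $\lambda_x^{n+1}=0$ on $V$ whenever $d_x^n=0$ on $L$. The only mildly subtle point—and hence the main thing I expect to have to write carefully—is the bookkeeping that identifies the image of $x\in L$ inside $M$ with $u_x+b_x$, since $x$ itself is not in $U$; once that decomposition is in hand, the rest is immediate from the construction formulas.
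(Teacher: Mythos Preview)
Your argument is correct and follows the same route as the paper's proof, which is the two-line observation that $B\simeq L$ as left $L$-module (so $\lambda_x|_B$ is nilpotent) and $xV\subseteq B$ (so $\lambda_x$ is nilpotent on all of $V$); you have simply unpacked the first assertion by explicitly exhibiting the conjugacy of $\lambda_x|_B$ with $d_x$ via $y\mapsto\pi(y)+D$. One incidental remark: since you already have $\lambda_x e=0$, your bound $\lambda_x^{n+1}=0$ can be sharpened to $\lambda_x^n=0$, because $\lambda_x^n(ce+b)=c\,\lambda_x^n e+\lambda_x^n b=0$.
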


\begin{proof}  Since $B \simeq L$ as left $L$-module, $\lambda_x$ is nilpotent on $B$.  As $xV \subseteq B$, $\lambda_x: V \to V$ is nilpotent.
\end{proof}

We can consider similarly the elements $x\in L$ whose right action $r_x:L \to L$ given by $r_x(y) = yx$ is nilpotent.  As remarked earlier, these include the elements with nilpotent left action.  

\begin{lemma}  \label{rightnil} Let $A$ be an ideal of the Leibniz algebra $L$ such that $AL=0$.  Let $(M,B)$ be the Construction \ref{constsplit} splitting algebra for $(L,A)$ and let $V = \langle e, B \rangle$ be its Construction \ref{constmod} module. Then for all $x \in L$, the right action $\rho_x: V \to V$ is nilpotent.
\end{lemma}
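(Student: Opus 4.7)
The plan is to observe that the right action on the Construction \ref{constmod} module has a very restricted structure: it vanishes on $B$ and maps $e$ into $B$, so it squares to zero on the whole of $V$. No deep input is needed; the claim is essentially a direct verification from the definition.

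More precisely, fix $x \in L$ and view $x$ as an element $\bar{x} \in \bar{L} \subseteq M$ via the embedding of Construction \ref{constsplit}. Decompose $\bar{x} = u + b$ with $u \in U$ and $b \in B$ (concretely, writing $x = (x-\pi(x)) + \pi(x)$ gives $u = \overline{x-\pi(x)} \in U$ and $b = \overline{\pi(x)} \in B$). Then by the defining formulas of Construction \ref{constmod},
\begin{equation*}
\rho_{\bar{x}}e = b \in B, \qquad \rho_{\bar{x}}b' = 0 \text{ for all } b' \in B.
\end{equation*}

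Since $V = \langle e \rangle + B$, an arbitrary $v \in V$ has the form $v = \alpha e + b'$ with $\alpha$ a scalar and $b' \in B$, whence $\rho_{\bar{x}}v = \alpha b \in B$. Applying $\rho_{\bar{x}}$ a second time yields $\rho_{\bar{x}}^{2}v = \alpha\,\rho_{\bar{x}}b = 0$. Hence $\rho_{\bar{x}}^{2} = 0$ on $V$, which is stronger than the required nilpotence. There is no real obstacle: the construction was rigged precisely so that the right action factors through the projection $V \twoheadrightarrow B$ followed by the zero map, and the statement drops out without any hypothesis on $x$.
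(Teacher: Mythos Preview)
Your proof is correct and follows exactly the same idea as the paper's: the paper simply observes that $Vx \subseteq B$ and $Bx = 0$, which is precisely your verification that $\rho_{\bar{x}}$ sends $V$ into $B$ and $B$ to $0$, hence $\rho_{\bar{x}}^{2}=0$. Your added detail about the decomposition $\bar{x}=u+b$ is unnecessary since the right action on $B$ vanishes regardless, but it does no harm.
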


\begin{proof}  For all $x \in L$, $Vx \subseteq B$ and $Bx = 0$.
\end{proof}

For a Lie algebra $L$, it was shown in Barnes \cite[Theorem 5.1]{extras}, that there exist modules with the additional property that for every saturated formation $\fF$ of soluble Lie algebras and every subnormal subalgebra $U \in \fF$ of $L$, $V$ is $(U, \fF)$-hypercentral.  I show that this holds for Leibniz algebras.  I require the following lemma.

\begin{lemma} \label{sn} Let $\fF$ be a saturated formation of soluble Leibniz algebras.  Suppose that $U \sn L$ and that $U \in \fF$.  Then $L$, regarded as $U$-module, is $(U,\fF)$-hypercentral. 

Let $A$ be an ideal of $L$ such that $AL=0$.  Let $(M,B)$ be the Construction \ref{constsplit} splitting algebra for $(L,A)$ and let $V = \langle e, B \rangle$ be its Construction \ref{constmod} module.  Then $V$ is $(U,\fF)$-hypercentral.
\end{lemma}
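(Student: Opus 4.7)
The plan is to exploit the hypothesis $U\in\fF$, which makes the condition of $\fF$-centrality essentially automatic. Under the definitions of \cite{SchunckLeib}, a $U$-module $W$ is $\fF$-central precisely when $U/\Ann_U(W)\in\fF$, and a $U$-module is $(U,\fF)$-hypercentral when it admits a $U$-composition series whose factors are all $\fF$-central. Since $\fF$ is a formation, it is closed under homomorphic images, so every quotient $U/\Ann_U(W)$ of $U\in\fF$ lies in $\fF$. Consequently every $U$-composition factor of every $U$-module is $\fF$-central.

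For the first assertion I would take any $U$-composition series of $L$, viewed as a $U$-module by restriction of the regular action, and conclude from the observation above that every factor is $\fF$-central, so the series witnesses the hypercentrality of $L$. For the second assertion, $V$ becomes a $U$-module through the inclusions $U\subseteq L\simeq\bar L\subseteq M$. One can apply the same observation directly to any $U$-composition series of $V$, or exhibit a convenient one using the $U$-submodule filtration $0\subseteq B\subseteq V$: here $B\simeq L$ as left $L$-module (with the right action $\rho$ zero on $B$), and $V/B=\langle\bar e\rangle$ is one-dimensional with trivial $U$-action, since $\lambda_xe=0$ and $\rho_xe\in B$ for every $x\in U$. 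Splicing a $U$-composition series of $B$ with the step $B\subset V$ produces a $U$-composition series of $V$ whose factors are all $\fF$-central.

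The argument is essentially painless once the definition of $(U,\fF)$-hypercentrality is unwound; there is no real obstacle. In particular the subnormality hypothesis $U\sn L$ does not actually enter the proof and is present only because this is the form in which the lemma is invoked by the main theorem. The only point worth flagging is that one is relying on the characterisation of $\fF$-centrality via $U/\Ann_U(W)\in\fF$, which is the definition adopted for Leibniz formations in \cite{SchunckLeib}.
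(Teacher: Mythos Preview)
Your argument rests on the assertion that an irreducible $U$-module $W$ is $\fF$-central precisely when $U/\Ann_U(W)\in\fF$. This is not the definition used in \cite{SchunckLeib} (nor in \cite{BGH}), and it is false in general. Take $\fF=\mathfrak{N}$, the saturated formation of nilpotent algebras. A one-dimensional $U$-module on which an abelian $U$ acts by a nonzero character $\chi$ has $U/\Ann_U(W)$ one-dimensional, hence nilpotent; yet such a $W$ is \emph{not} $\mathfrak{N}$-central, since $\mathfrak{N}$-central means precisely that $U$ acts trivially. The correct notion (for a saturated $\fF$) is tied to the local definition of $\fF$, or equivalently to membership of the split extension in $\fF$; it is strictly stronger than the quotient condition you state. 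Consequently your blanket claim that every $U$-composition factor of every $U$-module is $\fF$-central whenever $U\in\fF$ fails, and with it the proof.

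Your remark that subnormality ``does not actually enter the proof'' is therefore also incorrect, and a small example shows it cannot be dropped. Let $L=\langle x,y\rangle$ with $xy=y$, $yx=-y$, take $U=\langle x\rangle\in\mathfrak{N}$. Then $U$ is not subnormal in $L$, and the $U$-composition factor $\langle y\rangle$ of $L$ carries the nontrivial action $x\cdot y=y$, so $L$ is not $(U,\mathfrak{N})$-hypercentral. The paper's proof uses the subnormal chain $U=U_0\ideq\cdots\ideq U_n=L$ in an essential way: since $U\subseteq U_{i-1}\ideq U_i$, the factors $U_i/U_{i-1}$ are trivial $U$-modules, and trivial modules are $\fF$-central for any saturated $\fF$. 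This handles $L/U$; the piece $U$ itself is $(U,\fF)$-hypercentral because $U\in\fF$ forces its \emph{chief} factors (not arbitrary irreducibles) to be $\fF$-central. For the passage from $L$ to $B$ one cannot simply say ``$B\simeq L$'', since only the left actions agree; the paper invokes \cite[Theorem~3.16]{SchunckLeib} to transfer hypercentrality across this change of right action.
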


\begin{proof}  We have a chain of subalgebras $U = U_0 \ideq U_1 \ideq \dots \ideq U_n = L$.  Since $U$ acts trivially on each $U_i/ U_{i-1}$, $L/U$ is $(U,\fF)$-hypercentral.  Since $U \in \fF$, $U$ is $(U,\fF)$-hypercentral and it follows that $L$ is $(U,\fF)$-hypercentral.

By Barnes \cite[Theorem 3.16]{SchunckLeib}, it follows that $B$ is $(U,\fF$)-hypercentral.  As $V/B$ is the trivial module, it is $(U,\fF)$-central and so $V$ is $(U,\fF)$-hypercentral.
\end{proof}

\begin{theorem}  Let $L$ be a Leibniz algebra of dimension $n$.  Then there exists a faithful $L$-module $V$ of dimension less than or equal to $n+1$ and with the following additional properties:
\begin{enumerate}
\item The left action $\lambda_x$ of $x$ on $V$ is nilpotent for all $x \in L$ whose left action $d_x$ on $L$ is nilpotent.
\item The right action $\rho_x$ of $x$ on $V$ is nilpotent for all $x \in L$ whose right action $r_x$ on $L$ is nilpotent.
\item $V$ is $(U,\fF)$-hypercentral for every saturated formation $\fF$ of soluble Leibniz algebras and every $U \sn L$ with $U \in \fF$.
\end{enumerate}
\end{theorem}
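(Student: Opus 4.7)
The plan is to assemble the theorem from the machinery already built: pick the right abelian ideal $A$, form the splitting algebra via Construction \ref{constsplit}, take its Construction \ref{constmod} module, and verify the four required properties by citing the preceding lemmas.

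First I would take $A = \Ann(L)$. One checks from the left Leibniz identity that $\Ann(L)$ is an ideal of $L$ (if $x\in\Ann(L)$, then $(xy)z = x(yz) - y(xz) = 0$ and $(yx)z = y(xz) - x(yz) = 0$), and $AL = 0$ by the very definition of $\Ann(L)$. This $A$ therefore satisfies the hypotheses of Constructions \ref{constsplit} and \ref{constmod}. Let $(M,B)$ be the Construction \ref{constsplit} splitting algebra for $(L,A)$ and $V = \langle e,B\rangle$ the associated Construction \ref{constmod} module. Since $\dim(B) = \dim(L) = n$, we have $\dim(V) = n+1$, giving the dimension bound.

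Next I would verify faithfulness. As noted after Construction \ref{constmod}, the choice $A \supseteq \Ann(L)$ (which is an equality here) guarantees that the left action of $U \simeq L/A$ on $B \simeq L$ is faithful, so $\ker_M(V) \subseteq B$; and then the right action of $B$ on $e$ being faithful forces $\ker_M(V) = 0$. Since $L$ embeds in $M$ through $\bar L$, the restriction $\ker_L(V) = 0$ as well.

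Finally, property (1) is exactly Lemma \ref{leftnil}. For property (2), Lemma \ref{rightnil} in fact gives the stronger statement that $\rho_x$ is nilpotent on $V$ for \emph{every} $x\in L$, which implies (2) a fortiori. Property (3) is precisely the second conclusion of Lemma \ref{sn}. There is no real obstacle: the construction has been engineered so that all three additional properties fall out of the already-proved lemmas once the abelian ideal $A = \Ann(L)$ is chosen so as to simultaneously satisfy $AL = 0$ (so the constructions apply) and $A \supseteq \Ann(L)$ (so faithfulness holds).
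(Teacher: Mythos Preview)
Your proof is correct and follows essentially the same approach as the paper: choose $A = \Ann(L)$, apply Constructions \ref{constsplit} and \ref{constmod}, and invoke Lemmas \ref{leftnil}, \ref{rightnil}, \ref{sn}. The only difference is that the paper first disposes of the case $\Z(L)=0$ by taking $V=L$ itself (dimension $n$) before running your argument when $\Z(L)\ne 0$; since the theorem only asks for dimension at most $n+1$, your uniform treatment is perfectly adequate.
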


\begin{proof}   If $\Z(L)=0$,  then $L$ as $L$-module satisfies all the conditions by Lemma \ref{sn}.  So suppose that $\Z(L) \ne 0$.  We take $A = \Ann(L)$ and the Construction \ref{constsplit} splitting algebra $(M,B)$ for $(L,A)$. Its Construction \ref{constmod} module $V$ has dimension $n+1$ and satisfies the other conditions by Lemmas \ref{leftnil}, \ref{rightnil} and  \ref{sn}. 
\end{proof}

\bibliographystyle{amsplain}

\end{document}